\numberwithin{equation}{section}
\begin{document}

\newtheorem{thm}{Theorem}[section]
\newtheorem{prop}[thm]{Proposition}
\newtheorem{lem}[thm]{Lemma}
\newtheorem{cor}[thm]{Corollary}
\newtheorem{ex}[thm]{Example}                         
\newtheorem{rem}[thm]{Remark}
\newtheorem*{defn}{Definition}
\newtheorem*{note}{Note}

\newcommand{\DD}{\mathbb{D}}
\newcommand{\NN}{\mathbb{N}}
\newcommand{\ZZ}{\mathbb{Z}}
\newcommand{\QQ}{\mathbb{Q}}
\newcommand{\RR}{\mathbb{R}}
\newcommand{\CC}{\mathbb{C}}
\renewcommand{\SS}{\mathbb{S}}

\renewcommand{\theequation}{\arabic{section}.\arabic{equation}}

\newcommand{\supp}{\mathop{\mathrm{supp}}}    

\newcommand{\re}{\mathop{\mathrm{Re}}}   
\newcommand{\im}{\mathop{\mathrm{Im}}}   
\newcommand{\dist}{\mathop{\mathrm{dist}}}  

\newcommand{\spn}{\mathop{\mathrm{span}}}   
\newcommand{\ind}{\mathop{\mathrm{ind}}}   
\newcommand{\rank}{\mathop{\mathrm{rank}}}   
\newcommand{\Fix}{\mathop{\mathrm{Fix}}}   
\newcommand{\codim}{\mathop{\mathrm{codim}}}   
\newcommand{\conv}{\mathop{\mathrm{conv}}}   

\newcommand{\pa}{\partial}
\newcommand{\ve}{\varepsilon}
\newcommand{\zi}{\zeta}
\newcommand{\Si}{\Sigma}
\newcommand{\cA}{{\mathcal A}}
\newcommand{\cG}{{\mathcal G}}
\newcommand{\cH}{{\mathcal H}}
\newcommand{\cI}{{\mathcal I}}
\newcommand{\cJ}{{\mathcal J}}
\newcommand{\cK}{{\mathcal K}}
\newcommand{\cL}{{\mathcal L}}
\newcommand{\cN}{{\mathcal N}}
\newcommand{\cR}{{\mathcal R}}
\newcommand{\cS}{{\mathcal S}}
\newcommand{\cT}{{\mathcal T}}
\newcommand{\cU}{{\mathcal U}}
\newcommand{\cC}{\mathcal{C}}
\newcommand{\cM}{\mathcal{M}}
\newcommand{\De}{\Delta}
\newcommand{\cX}{\mathcal{X}}
\newcommand{\cP}{\mathcal{P}}

\newcommand{\ol}{\overline}
\newcommand{\ul}{\underline}

\newcommand{\AC}{\mathop{\mathrm{AC}}}   
\newcommand{\Lip}{\mathop{\mathrm{Lip}}}   
\newcommand{\es}{\mathop{\mathrm{esssup}}}   
\newcommand{\les}{\mathop{\mathrm{les}}}   

\newcommand{\la}{\lambda}
\newcommand{\La}{\Lambda}    
\newcommand{\de}{\delta}    
\newcommand{\fhi}{\varphi} 
\newcommand{\vro}{\varrho} 
\newcommand{\ga}{\gamma}    
\newcommand{\ka}{\kappa}   

\newcommand{\core}{\heartsuit}
\newcommand{\diam}{\mathrm{diam}}

\newcommand{\lan}{\langle}
\newcommand{\ran}{\rangle}
\newcommand{\tr}{\mathop{\mathrm{tr}}}
\newcommand{\diag}{\mathop{\mathrm{diag}}}
\newcommand{\dv}{\mathop{\mathrm{div}}}

\newcommand{\al}{\alpha}
\newcommand{\be}{\beta}
\newcommand{\Om}{\Omega}
\newcommand{\na}{\nabla}

\newcommand{\OO}{\mathcal{O}}

\newcommand{\nr}{\Vert}
\newcommand{\weak} {\rightharpoonup}
 
\newcommand{\om}{\omega}
\newcommand{\si}{\sigma}
\newcommand{\te}{\theta}
\newcommand{\Ga}{\Gamma}

\newcommand{\ds}{\displaystyle}

\title{Explicit complex-valued solutions \\ of the 2D eikonal equation}

\author{Rolando Magnanini} 
\address{Dipartimento di Matematica ed Informatica ``U.~Dini'',
Universit\` a di Firenze, viale Morgagni 67/A, 50134 Firenze, Italy.}
    \email{rolando.magnanini@unifi.it}
    \urladdr{http://web.math.unifi.it/users/magnanin}

\dedicatory{To Bob Gilbert, a friend and a mentor, with profound gratitude. \\
He is the man who introduced me to the deep beauties of the complex variable.}

\begin{abstract}
We present a method to obtain explicit solutions of the com\-plex eikonal equation in the plane. This equation arises in the ap\-prox\-i\-ma\-tion of Helmholtz equation by the WKBJ or EWT methods. We obtain the complex-valued solutions (called eikonals) as parametrizations in a complex variable. We consider both the cases of constant and non-constant index of refraction. In both cases, the relevant parametrizations depend on some holomorphic function. In the case of non-constant index of refraction, the parametrization also depends on some extra exponential complex-valued function and on a quasi-conformal homeomorphism. This is due to the use of the theory of pseudo-analytic functions and the related similarity principle. The parametrizations give information about the formation of caustics and the light and shadow regions for the relevant eikonals.
\end{abstract}

\keywords{Complex eikonal equation, geometrical optics, explicit solutions, pseudo-analytic functions}
    \subjclass[2010]{78A05, 35F20, 35C05, 35A22, 35A30, 30G20}

\maketitle

\pagestyle{myheadings} \thispagestyle{plain} \markboth{R. MAGNANINI}{SOLUTIONS OF THE COMPLEX EIKONAL EQUATION}

\section{Introduction} \label{section introd}
Let $n$ be some continuous {\it strictly positive real-valued} function of $x\in\RR^N$ and consider the following nonlinear first-order differential equation
\begin{equation} 
\label{eikonal-equation}
\sum_{j=1}^N \Big( \frac{\pa \phi}{\pa x_j} \Big)^2 = n^2;
\end{equation}
\eqref{eikonal-equation} is the so-called {\it eikonal equation} that arises in many a field of science, 
such as  optics, acoustics, nuclear physics, control theory, to name a few.

An important motivation to study \eqref{eikonal-equation} is that its solutions help to understand the asymptotic behaviour of solutions of the {\it Helmholtz equation}
(that ensues from Maxwell's system),
\begin{equation} \label{eq helmholtz}
\Delta W + k^2 n^2 W =0,
\end{equation}
for large values of the {\it wave number} $k.$ 
Here, $n$ has the meaning of an {\it index of refraction}, whose reciprocal is proportional 
to the velocity of propagation of waves through the medium.

An expansion, originating from the so-called WKBJ method (after Wentzel, Kramers, Brillouin, and Jeffreys) represents solutions of \eqref{eq helmholtz}, asymptotically as $k\to+\infty$, by
\begin{equation} 
\label{sviluppo WKBJ}
W \simeq e^{ik\phi} \sum_{m=0}^\infty A_m (ik)^{-m}.
\end{equation}
Here, $\phi$ is a real-valued {\it phase function}, which is a solution of \eqref{eikonal-equation}, and the $A_m$'s are solutions of the so-called {\it transport equations}:
\begin{eqnarray*}
&& 2 \na \phi \cdot \na A_0 + \De \phi\, A_0 = 0, \\
&& 2 \na \phi \cdot \na A_m + \De \phi\, A_m = \De A_{m-1}, \quad (m=1,2,\ldots);
\end{eqnarray*}
see e.g. \cite{LK}.
\par
The expansion \eqref{sviluppo WKBJ} is successful in accounting for those phenomena pertaining {\it geometrical optics}, such as the insurgence of {\it caustics} via the mechanism of rays. However,  the expansion cannot describe those phenomena, such as the development of {\it evanescent waves}, that take place past a caustic and create a somewhat blurred {\it shadow region}.
\par
More information can be derived by the theory of {\it Evanescent Wave Tracking (EWT)} proposed by Felsen and coworkers
(see \cite{BM}-\cite{HF}). In EWT, \eqref{sviluppo WKBJ} is replaced by the following expansion
\begin{equation} 
\label{sviluppo EWT}
W= e^{\la + kv} \Big\{ \cos(\mu+ku) + \OO\Big( \frac{1}{k} \Big) \Big\} \quad \textmd{as } k\to \infty.
\end{equation}
The real-valued functions $u,v,\la,\mu$ are all independent of $k$ and turn out to obey both
\begin{equation*} 
\label{eq uv sist 1}
|\na u|^2 - |\na v|^2 - n^2 = 0, \quad \na u \cdot \na v = 0,
\end{equation*}
and
\begin{equation*} 
\label{eq uv sist 2}
\na u \cdot \na \la + \na v \cdot \na \mu + \frac{1}{2} \Delta u = 0, \ \  \quad
\na v \cdot \na \la - \na u \cdot \na \mu + \frac{1}{2} \Delta v = 0.
\end{equation*}
Notice that the former system for $u$ and $v$ tells us that $\phi=u+iv$ is a complex-valued solution of \eqref{eikonal-equation}.
\par
Observe that in the region in which $v\equiv 0$, $\phi$ is real-valued and the character of the expansion in \eqref{sviluppo EWT} is purely oscillatory. This corresponds to the regime of geometrical optics (the light region) in which light propagates along rays. When $v<0$ instead, an inspection of \eqref{sviluppo EWT} tells us that $W$ decays exponentially for large values of $k$. This creates the evanescent waves, which vanish exponentially in the shadow region near the caustic.
\par
One more sophisticated asymptotic expansion, which powerfully helps to represent solutions of \eqref{eq helmholtz} on both sides of a caustic, and once more shows the utility of complex-valued solutions of \eqref{eikonal-equation}, is developed in a theory of Kravtsov and Ludwig (see \cite{Kr1}, \cite{Kr2},\cite{LBL},\cite{Lu},\cite{Lu1},\cite{Lu2},\cite{BM}). It reads as
\begin{equation*} 
\label{sviluppo Krav e Ludw}
W \simeq {\rm Ai} (k^{\frac{2}{3}} v) e^{ikU} \sum_{m=0}^\infty A_m (ik)^{-m} + i k^{-\frac{1}{3}} {\rm Ai}'(k^{\frac{2}{3}} v) e^{ikU} \sum_{m=0}^\infty B_m (ik)^{-m}, 
\end{equation*}
as $k\to \infty$. The real-valued functions $U$ and $V$ and the complex-valued ones $A_m$ and $B_m$ are independent on $k$; $Ai(\cdot)$ denotes the {\it Airy function} defined by the problem
\begin{equation*}
\label{eq Airy}
 {\rm Ai}'' (s)-s\,{\rm Ai}(s)= 0,\quad {\rm Ai}(0)= 3^{-\frac{2}{3}}\Ga(2/3)^{-1}, \quad {\rm Ai}'(0)= - 3^{-\frac{1}{3}}\Ga(1/3)^{-1}.
\end{equation*}
Here, $\Ga$ is Euler's gamma function. By Kravtsov and Ludwig's expansion and \eqref{eq helmholtz}, one gets that
\begin{equation*}
\label{eq U e V 10}
| \na U|^2 - V | \na V|^2 - n^2 =0,\quad \na U \cdot \na V =0.
\end{equation*}
Again observe that the complex-valued function $U+i\sqrt{V}$ satisfies \eqref{eikonal-equation}.

\medskip

Complex-valued solutions of \eqref{eikonal-equation} can be obtained by means of the method of {\it complex rays} (see \cite{CLOT}, \cite{MT1}). As an illustration, we carry out here the simplest case in which $N=2$ and $n\equiv 1$. Given a pair $(a,b)$, consider the parametrization
\begin{equation}
\label{rays}
x=a+\phi\,\cos\ga,\quad y = b+\phi\, \sin\ga.
\end{equation}
When $a,b\in\RR$, \eqref{rays} defines a real-valued solution $\phi$ of \eqref{eikonal-equation} as 
the distance of the point $(x,y)$ to the point $(a,b)$. If either $a$ or $b$ is not real, 
then a complex-valued solution $\phi$ of \eqref{eikonal-equation} can be constructed via \eqref{rays} 
by making complex the parameter $\ga$, but by keeping $x$ and $y$ real-valued.
For instance, for $a=0$ and $b=i,$ we obtain:
\begin{equation}
\label{dist}
\phi=\sqrt{x^2+(y-i)^2}=\sqrt{(z+1)(\ol{z}-1)}.
\end{equation}
Unfortunately, this approach appears to be difficult to use when the refractive index $n$ is non-constant.
\par
In this paper, we shall present another way to construct explicit complex-valued solutions of \eqref{eikonal-equation} in the plane $(x,y)$. The method can be extended to the case of non-constant index of refraction. The solutions are obtained as parametrizations of the following type
$$
\CC\supset A\ni\zi\mapsto (z(\zi),\phi(\zi))\in\CC^2.
$$ 
In fact, if we introduce the complex variable $z=x+i\,y$ and its conjugate $\ol{z}=x-i\,y$, \eqref{eikonal-equation} can be rewritten as:
\begin{equation}
\label{eikonal-equation in z zbar}
4\, \phi_z \phi_{\ol{z}} = n(z)^2.
\end{equation}
We then observe that, for fixed $z$, the point $(\phi_{z},\phi_{\ol{z}})$ belongs to a hyperbola (in $\CC^2$) that, for instance, we can parameterize by a function $\zi = \zi(z,\ol{z})$ as
\begin{equation*}
\phi_z = \frac{n}{2\zi}, \quad \phi_{\ol{z}} = \frac{n}{2}\, \zi.
\end{equation*}
This assumption gives a differential constraint on the parameter function $\zi(z,\ol{z})$, which  can be converted in an equation for its inverse $z=z(\zi,\ol{\zi})$. This can be solved and, by rather simple manipulations, we obtain solutions of \eqref{eikonal-equation in z zbar}  parametrized as
\begin{equation*}
z=z(\zi,\ol{\zi}), \quad \phi=\phi(\zi,\ol{\zi}).
\end{equation*}
\par
We begin with Section \ref{sec:Legendre-constant}, in which we explain in detail the method, when $n$ is constant. As a result, we obtain the (local) explicit parametrization of $\phi$:
\begin{equation}
\label{parametrization-constant}
\begin{array}{l}
\displaystyle z= \frac{f(\zi) + \zi^2 \overline{f(\zi)}}{1-|\zi|^4}, \\
\displaystyle  \phi= \frac{1}{1-|\zi|^4} \Big\{ \zi \ol{f(\zi)} + \frac{f(\zi)}{\zi} \Big\} - \frac{f(\zi)}{2\zi} + \frac{1}{2} \int \frac{f(\zi)}{\zi^2} d\zi.
\end{array}
\end{equation}
Here --- and this is remarkable --- $f(\zi)$ is some analytic function of the variable $\zi$.
\par
In the case in which $n$ is non-constant, the situation is more complicated, but perfectly consistent. We present this case in Section \ref{sec:Legendre-non-constant}. The relevant parametrization this time is 
\begin{equation}
\label{parametrization-non-constant}
z=Z(\chi(\zi)), \quad
\phi= \Phi(\zi,\chi(\zi)),
\end{equation}
where
$$
Z(\chi)=\frac{e^{s(\chi)} f(\chi)+\ka(\chi)\,e^{\ol{s(\chi)}} \ol{f(\chi)}}{1-|\ka(\chi)|^2} \ \mbox{ for } \ \chi\in\CC.
$$
For the details on how to construct $\Phi$, we refer the reader to Corollary \ref{cor:recovering-phi}.
\par
The functions $\chi(\zi)$, $s(\chi)$, and $\ka(\chi)$ are related to the derivatives of the function $\ell=\log n$, as explained in Section \ref{sec:Legendre-non-constant}. In particular, $\chi(\zi)$ is a homeomorphism associated to a metric induced by the derivatives of $\ell$, i.e. it is a univalent solution of the {\it Beltrami equation:} 
$$
\chi_{\ol{\zi}}=\si\,\chi_\zi.
$$ 
Here, $\si$ is a complex-valued function that depends on the derivatives of $\ell$. 
\par
In order to obtain the parametrization \ref{parametrization-non-constant}, it was crucial to use the elegant and now classical theory of \textit{pseudoanalytic functions} developed by L. Bers and I.~N.~Vekua (see \cite{Be}, \cite{Ve}, and \cite{AM} for another application).
If $\ell$ is constant, it turns out that $\si\equiv 0$, so that $\chi(\zi)$ can be taken to be the identity $\chi(\zi)=\zi$. Accordingly, one can compute that $s\equiv 0$ and $\ka(\chi)=\chi^2$, so that \eqref{parametrization-non-constant} reduces to \eqref{parametrization-constant}.
\par
In Section \ref{sec:light-shadow},
we analyse \eqref{parametrization-constant} and \eqref{parametrization-non-constant} and use them to give geometrical information about the set of critical points of $v=\im (\phi)$, and accordingly of caustics and the related light and shadow regions. This analysis goes along with and extends that developed (in the case of constant index of refraction) by the author and his co-authors in the series of papers \cite{MT1}-\cite{ MT5}, and \cite{CM}. In fact, in that case, it confirms that the set of critical points of $v=\im (\phi)$ is always a \textit{continuum}, made either of a finite number of segments or even of regions with non-empty interior (which correspond to the light regions). When the index of refraction is non-constant, the segments and light regions are distorted by the mapping $z\mapsto\chi(\zi(z,\ol{z}))$, which takes into account the effect of the index of refraction $n$.
\par
In our presentation, regularity of solutions is not an issue. The functions we consider should be intended as sufficiently regular as to make our computations work. Thus, in our statements we shall not specify regularity assumptions. Also, the parametrizations we obtain should also be intended to have a local character.

\section{Explicit solutions by Legendre transformation: \\
constant index of refraction}
 \label{sec:Legendre-constant}

\par
In this section, we will consider equation \eqref{eikonal-equation} in the plane and with a homogeneous index of refraction $n$ that we normalize to 1.

Introducing the complex variable $z=x+i\,y$ and its conjugate $\ol{z}=x-i\,y$  gives that \eqref{eikonal-equation} is equivalent to the following equation
\begin{equation}
\label{eq 2.1}
4\,\phi_z \phi_{\ol{z}} = 1,
\end{equation}
since $\pa_{x} = \pa_z + \pa_{\ol{z}}$ and $\pa_{y} = i\,(\pa_z - \pa_{\ol{z}})$.

Equation \eqref{eq 2.1} tells us that the point $(\phi_z,phi_{\ol{z}})$ belongs to a hyperbola, that we choose to parameterize by
\begin{equation}\label{eq 2.2}
\phi_z = \frac{1}{2\zi},\quad \phi_{\ol{z}} = \frac{1}{2} \zi,
\end{equation}
for some non-constant parameter $\zi=\zi(z,\ol{z})$.

A sufficiently smooth function $\phi$ satisfies Schwarz's lemma on second derivatives ($\phi_{z \ol{z}}=\phi_{\ol{z} z}$), hence \eqref{eq 2.2} gives that
\begin{equation}
\label{schwarz-equation}
\zi^2 \zi_z + \zi_{\ol{z}} = 0.
\end{equation}

We now change variables --- in fact operating a {\it Legendre transformation} --- and invert the roles of $z,\ol{z}$ and $\zi,\ol{\zi}$. Since it holds that
\begin{equation*}
\begin{array}{l}
\zi_z z_\zi + \zi_{\ol{z}} \ol{z}_\zi = 1, \\
\zi_z z_{\ol{\zi}} + \zi_{\ol{z}} \ol{z}_{\ol{\zi}} = 0,
\end{array}
\end{equation*}
we have that
\begin{equation}
\label{legendre}
\begin{array}{l}
\displaystyle \zi_z = \frac{\ol{z}_{\ol{\zi}}}{|z_\zi|^2-|z_{\ol{\zi}}|^2},  \quad \zi_{\ol{z}} = -\frac{z_{\ol{\zi}}}{|z_\zi|^2-|z_{\ol{\zi}}|^2}, \\
\displaystyle z_\zi = \frac{\ol{\zi}_{\ol{z}}}{|\zi_z|^2-|\zi_{\ol{z}}|^2},  \quad z_{\ol{\zi}} = -\frac{\zi_{\ol{z}}}{|\zi_z|^2-|\zi_{\ol{z}}|^2}. 
\end{array}
\end{equation}
\par
Notice that \eqref{schwarz-equation} gives that
$$
|\zi_z|^2-|\zi_{\ol{z}}|^2=|\zi_z|^2 (1-|\zi|^4)=|\zi_z|^2 (1+|\zi|^2)| (1-|\zi|^2).
$$
Since $\zi_z\ne 0$ (otherwise $\zi$ would be constant, thanks to \eqref{schwarz-equation}), we have that the jacobian determinant $|\zi_z|^2-|\zi_{\ol{z}}|^2$ is zero if and only if $|\zi|^2=1$. This holds if and only if $\na v=0$. In fact, we compute that 
\begin{equation*}
\label{singularity}
\zi-\frac1{\ol{\zi}}=2\,\phi_{\ol{z}}-2\,\ol{\phi}_{\ol{z}}=2\,i\,(v_x+i\,v_y),
\end{equation*}
so that
\begin{equation}
\label{jacobian-v}
\ol{\zi}\,(|\zi_z|^2-|\zi_{\ol{z}}|^2)=-2\,i\,|\zi_z|^2 (|\zi|^2+1)(v_x+i\,v_y).
\end{equation}
\par
Thus, away from the critical points of $v$, $\zi=\zi(z,\ol{z})$ is locally invertible by a function $z=z(\zi,\ol{\zi})$, and
\eqref{schwarz-equation} and \eqref{legendre} give:
\begin{equation}
\label{inverse-schwarz}
\zi^2 \ol{z}_{\ol{\zi}} - z_{\ol{\zi}} = 0.
\end{equation}
(For global invertibility, see \cite{Km}, for instance.)
Since \eqref{inverse-schwarz} can be rewritten as
\begin{equation*}
\pa_{\ol{\zi}} ( z - \zi^2 \ol{z}) = 0,
\end{equation*}
we infer that $z-\zi^2 \ol{z}$ must be an analytic function $f(\zi)$: i.e. we have that
\begin{equation}
\label{complex-rays}
z-\zi^2 \ol{z} = f(\zi),
\end{equation}
locally.

\begin{ex}
{\rm Let $f(\zi) = f_0 + f_1 \zi + f_2 \zi^2$ --- a second degree polynomial in $\zi$. Then, we obtain from \eqref{complex-rays} that
\begin{equation*}
(f_2+\ol{z}) \zi^2 + f_1 \zi + f_0 -z =0,
\end{equation*}
and hence 
\begin{equation*}
\zi = \frac{-f_1 + \sqrt{f_1^2 + 4(z-f_0)(\ol{z}+f_2)}}{2(\ol{z} +f_2)},
\end{equation*}
where the square root is the multi-valued complex-valued function.   
Plugging this expression of $\zi$ into \eqref{eq 2.2} and integrating gives that
$$
\phi=\frac12\sqrt{f_1^2+4(z-f_0)(\ol{z}+f_2)}-\frac12\, f_1\, \log\frac{\sqrt{f_1^2+4(z-f_0)(\ol{z}+f_2)}+f_1}{z-f_0}+c,
$$
where $c$ is an arbitrary constant.
Notice that, if we choose $f(\zi)=-1-\zi^2$, we recover \eqref{dist}.
}
\end{ex}

In general, we cannot expect to always obtain solutions in 
the explicit form as in this example. However, we will next show that
solutions in parametric form can always be constructed by this method. 

\begin{thm}
\label{th:parametrization}
Let $\Om\subset\CC$ be a simply connected open domain. Suppose that $\na v\ne 0$ in $\Om$. Then, $\phi=u+i\,v$ is a solution in $\Om$ of \eqref{eikonal-equation} if and only if there exists an analytic function $f(\zi)$ such that
\begin{equation}
\label{parametrization}
\begin{array}{l}
\ds z=\frac{f(\zi) + \zi^2 \overline{f(\zi)}}{1-|\zi|^4},  \\
\ds \phi= \frac{1}{1-|\zi|^4} \Big\{ \zi \overline{f(\zi)} + \frac{f(\zi)}{\zi} \Big\} - \frac{f(\zi)}{2\zi} + \frac{1}{2} \int \frac{f(\zi)}{\zi^2} d\zi. 
\end{array}
\end{equation}
\end{thm}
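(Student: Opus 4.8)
The plan is to prove the equivalence by building directly on the two identities established just before the statement: the complex-rays relation \eqref{complex-rays}, $z-\zi^2\ol{z}=f(\zi)$ with $f$ analytic, and its Legendre-inverted counterpart \eqref{inverse-schwarz}, $z_{\ol{\zi}}=\zi^2\ol{z}_{\ol{\zi}}$. For the forward implication, assume $\phi=u+i\,v$ solves \eqref{eikonal-equation} in $\Om$ with $\na v\ne 0$. By \eqref{jacobian-v} the hypothesis $\na v\ne 0$ is exactly what makes the Jacobian $|\zi_z|^2-|\zi_{\ol{z}}|^2$ nonzero, so $\zi$ is locally invertible and \eqref{complex-rays} is available. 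First I would recover $z$ purely algebraically: conjugating \eqref{complex-rays} gives the second relation $\ol{z}-\ol{\zi}^2 z=\ol{f(\zi)}$, and solving this $2\times 2$ linear system (whose determinant is $1-|\zi|^4$, nonvanishing off $|\zi|=1$) yields simultaneously the first line of \eqref{parametrization} and the companion formula $\ol{z}=(\ol{f(\zi)}+\ol{\zi}^2 f(\zi))/(1-|\zi|^4)$.

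The crux is recovering $\phi$. Using the chain rule together with \eqref{eq 2.2}, one has $\phi_{\ol{\zi}}=\tfrac{1}{2\zi}z_{\ol{\zi}}+\tfrac{\zi}{2}\ol{z}_{\ol{\zi}}$; feeding in $z_{\ol{\zi}}=\zi^2\ol{z}_{\ol{\zi}}$ from \eqref{inverse-schwarz} collapses this to $\phi_{\ol{\zi}}=\zi\,\ol{z}_{\ol{\zi}}=(\zi\,\ol{z})_{\ol{\zi}}$. Thus $\phi-\zi\,\ol{z}$ is holomorphic in $\zi$, say equal to $g(\zi)$. To determine $g$, I would compute $\phi_\zi$ the same way, substituting $z_\zi=f'(\zi)+2\zi\,\ol{z}+\zi^2\ol{z}_\zi$ obtained by differentiating \eqref{complex-rays}; matching the result against $\phi_\zi=\ol{z}+\zi\,\ol{z}_\zi+g'(\zi)$ gives $g'(\zi)=f'(\zi)/(2\zi)$. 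Integration by parts then produces $g(\zi)=f(\zi)/(2\zi)+\tfrac12\int f(\zi)/\zi^2\,d\zi$, and substituting the formula for $\ol{z}$ into $\phi=\zi\,\ol{z}+g(\zi)$ yields the second line of \eqref{parametrization} after a brief simplification using $|\zi|^2\ol{\zi}=|\zi|^4/\zi$.

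For the converse, starting from an analytic $f$ and the pair $(z,\phi)$ defined by \eqref{parametrization}, I would reverse these computations: verify by linear algebra that $z-\zi^2\ol{z}=f(\zi)$ holds, compute $z_\zi,z_{\ol{\zi}},\phi_\zi,\phi_{\ol{\zi}}$, invert the chain rule, and check that $\phi_z=1/(2\zi)$ and $\phi_{\ol{z}}=\zi/2$, so that $4\,\phi_z\phi_{\ol{z}}=1$, recovering \eqref{eikonal-equation}.

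The step I expect to be delicate is not the algebra but the well-definedness underlying the converse: one must ensure that $\zi\mapsto z$ from \eqref{parametrization} is locally invertible---so that $\phi$ really becomes a function of $(z,\ol{z})$---and that $\phi$ is single-valued. Invertibility once more reduces to $1-|\zi|^4\ne 0$, i.e. to $\na v\ne 0$, which is the standing hypothesis, while single-valuedness of the antiderivative $\int f(\zi)/\zi^2\,d\zi$ is covered by the local (and, where needed, simply connected) character of the parametrization. Beyond this bookkeeping, both directions are routine differentiations.
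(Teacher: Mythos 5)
Your proposal is correct and follows essentially the same route as the paper: solve the linear system formed by \eqref{complex-rays} and its conjugate to get $z$, recover $\phi$ by chain-rule computation of $\phi_\zi,\phi_{\ol{\zi}}$ via \eqref{eq 2.2} followed by integration, and prove the converse by inverting the chain rule through \eqref{legendre} and verifying $4\,\phi_z\phi_{\ol{z}}=1$. Your organization of the integration step --- writing $\phi=\zi\,\ol{z}+g(\zi)$ with $g$ holomorphic and pinning down $g'(\zi)=f'(\zi)/(2\zi)$ --- is a tidier way of carrying out the ``local integration'' that the paper merely asserts, but it is the same underlying computation.
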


\begin{proof}
In fact, let us go back to \eqref{complex-rays}. By conjugating, we obtain the linear system in $z$ and $\ol{z},$
\begin{eqnarray*}
 &&z-\zi^2\ol{z} = f(\zi), \\
&&\ol{\zi}^2z-\ol{z} = -\overline{f(\zi)},
\end{eqnarray*}
that is solved by the $z$ given in \eqref{parametrization} and its conjugate.

Now, since $\phi_\zi = \phi_z z_\zi + \phi_{\ol{z}}\ol{z}_\zi$ and $\phi_{\ol{\zi}} = \phi_z z_{\ol{\zi}} + \phi_{\ol{z}}\ol{z}_{\ol{\zi}}$, from \eqref{eq 2.2} and \eqref{parametrization} we obtain  that
\begin{eqnarray*}
&& \phi_\zi = \frac{1+|\zi|^4}{(1-|\zi|^4)^2} \Big\{ \frac{f'(\zi)}{2 \zi} (1-|\zi|^4) + \ol{\zi}^2 f(\zi) + \overline{f(\zi)} \Big\}, \\
&& \phi_{\ol{\zi}} = \frac{2|\zi|^2}{(1-|\zi|^4)^2} \Big\{ \frac{\overline{f'(\zi)}}{2 \ol{\zi}} (1-|\zi|^4) + \zi^2 \overline{f(\zi)} + f(\zi) \Big\}.
\end{eqnarray*}
These equations can be locally integrated to get the second equation in \eqref{parametrization}. 
\par
Viceversa, thanks to \eqref{legendre} we have that
$$
\phi_z=\phi_\zi\,\zi_z+\phi_{\ol{\zi}}\,\ol{\zi}_z=\frac{\phi_\zi\,\ol{z}_{\ol{\zi}}-\phi_{\ol{\zi}}\ol{z}_{\zi}}{|z_\zi|^2-|z_{\ol{\zi}}|^2}, \quad
\phi_{\ol{z}}=\phi_\zi\,\zi_{\ol{z}}+\phi_{\ol{\zi}}\,\ol{\zi}_{\ol{z}}=\frac{-\phi_\zi\,z_{\ol{\zi}}+\phi_{\ol{\zi}}z_{\zi}}{|z_\zi|^2-|z_{\ol{\zi}}|^2}.
$$
Thus, we compute $\phi_\zi, \phi_{\ol{\zi}}, z_\zi$, and $z_{\ol{\zi}}$ from \eqref{parametrization} and discover that $4\,\phi_z\,\phi_{\ol{z}}=1$, after tedious calculations.
\end{proof}

\smallskip

\section{Explicit solutions by Legendre transformation: \\
non-constant index of refraction}
 \label{sec:Legendre-non-constant}

If the index of refraction in \eqref{eikonal-equation} is non-constant,  then solutions of \eqref{eikonal-equation} still admit a parametrization in terms of some analytic function. In this section, we shall show how to construct the necessary modifications.

\begin{rem}
{\rm
There is a special case in which one can easily contruct solutions, when the index of refraction in \eqref{eikonal-equation} is non-constant. In fact, let $w:\Om\to w(\Om)$ be a bi-holomorphic homeomorphism and suppose that $n$ has the special form:
$$
n(z)=|w'(z)| \ \mbox{ for } \ z\in\Om.
$$
Let $\phi$ be a solution of \eqref{eikonal-equation} in $\Om$. Notice that the function $\psi$, defined in $w(\Om)$ by $\psi(w)=\phi(z)$ for $z\in\Om$, is such that
$$
\phi_z=\psi_w\, w'(z), \quad \phi_{\ol{z}}=\psi_{\ol{w}}\, \ol{w'(z)}.
$$
Thus, we have that
$$
|w'(z)|^2=4\,\phi_z\,\phi_{\ol{z}}=4\,\psi_w\,\psi_{\ol{w}}\,|w'(z)|^2.
$$
Since $w'(z)\ne 0$, we infer that  $4\,\psi_w\,\psi_{\ol{w}}=1$ in $w(\Om)$. Therefore, in this case, we can simply reduce our problem to the case of constant index of refraction.
\par
In general, however, $n(z)$ may not be the modulus of an analytic function. In what follows, we shall describe how to proceed. In Section \ref{sec:light-shadow}, we shall see that, even if the procedure can hardly be used to construct explicit solutions, it still provides useful geometric information about caustics, shadow regions, and light regions.   
}
\end{rem}

We start with the following simple proposition.

\begin{prop}
Let $N=2$ and suppose that $n$ is of class $C^1$ and bounded away from zero.  Set $\ell=\log n$.  
\par
If $\phi$ is a solution of \eqref{eikonal-equation}, then the function $\zi$ defined by
$$
\zi=\frac{2}{n}\,\phi_{\ol{z}}
$$
is a solution of the equation:
\begin{equation}
\label{a-b-c-d}
a\,\zi_z+b\,\ol{\zi}_{\ol{z}}+c\,\zi_{\ol{z}}+d\,\ol{\zi}_z=0,
\end{equation}
where
\begin{equation}
\label{coeff-a-b-c-d}
a=1+\zi\,\ell_\zi, \quad b=-\frac1{\zi}\,\ell_{\ol{\zi}}, \quad c=\frac1{\zi^2}\,(1-\zi\,\ell_\zi).
\quad d=\zi\,\ell_{\ol{\zi}}.
\end{equation}
\end{prop}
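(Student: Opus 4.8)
The plan is to reproduce, in the variable-index setting, the derivation that produced the Schwarz equation \eqref{schwarz-equation} in Section \ref{sec:Legendre-constant}, and then to trade the derivatives of $\ell$ taken with respect to $z,\ol z$ for derivatives with respect to $\zi,\ol\zi$. First I would record the two first-order relations that the definition of $\zi$ forces. Since $\zi=\frac{2}{n}\phi_{\ol z}$ we have at once $\phi_{\ol z}=\frac{n}{2}\,\zi$; inserting this into the eikonal equation written as in \eqref{eikonal-equation in z zbar}, namely $4\,\phi_z\phi_{\ol z}=n^2$, and using that $n>0$ and $\zi\neq 0$, yields $\phi_z=\frac{n}{2\zi}$. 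These are the exact analogues of \eqref{eq 2.2}, with the constant $1$ replaced by $n$.

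Next I would impose the compatibility condition $\phi_{z\ol z}=\phi_{\ol z z}$ on the smooth function $\phi$. Differentiating $\phi_z=\frac{n}{2\zi}$ in $\ol z$, differentiating $\phi_{\ol z}=\frac{n}{2}\zi$ in $z$, equating, and dividing by $n/2$, one is led to
\begin{equation*}
\zi_z+\frac{1}{\zi^2}\,\zi_{\ol z}+\frac{n_z}{n}\,\zi-\frac{n_{\ol z}}{n}\,\frac{1}{\zi}=0.
\end{equation*}
Since $\ell=\log n$ gives $\ell_z=n_z/n$ and $\ell_{\ol z}=n_{\ol z}/n$, this is the variable-index replacement of \eqref{schwarz-equation}:
\begin{equation*}
\zi_z+\frac{1}{\zi^2}\,\zi_{\ol z}+\ell_z\,\zi-\frac{\ell_{\ol z}}{\zi}=0.
\end{equation*}
When $\ell$ is constant the last two terms vanish and we recover exactly \eqref{schwarz-equation}, a consistency check worth noting.

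The final, and conceptually the only delicate, step is to convert $\ell_z,\ell_{\ol z}$ into the symbols $\ell_\zi,\ell_{\ol\zi}$ that appear in \eqref{coeff-a-b-c-d}. Here I read $\ell_\zi$ and $\ell_{\ol\zi}$ as the derivatives of $\ell$ once it is regarded, through the local inverse $z=z(\zi,\ol\zi)$, as a function of $\zi,\ol\zi$; this inversion is legitimate away from the critical points of $v$, exactly as recorded around \eqref{jacobian-v}. The chain rule then gives $\ell_z=\ell_\zi\,\zi_z+\ell_{\ol\zi}\,\ol\zi_z$ and $\ell_{\ol z}=\ell_\zi\,\zi_{\ol z}+\ell_{\ol\zi}\,\ol\zi_{\ol z}$. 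Substituting these into the displayed equation and collecting the coefficients of $\zi_z$, $\ol\zi_{\ol z}$, $\zi_{\ol z}$, and $\ol\zi_z$ produces precisely $a=1+\zi\,\ell_\zi$, $b=-\frac{1}{\zi}\ell_{\ol\zi}$, $c=\frac{1}{\zi^2}(1-\zi\,\ell_\zi)$, and $d=\zi\,\ell_{\ol\zi}$, which is \eqref{a-b-c-d}. I expect the main obstacle to be purely interpretive rather than computational: one must fix the meaning of $\ell_\zi$ and $\ell_{\ol\zi}$ as the intrinsic $\zi$-derivatives of $\ell$ and keep the chain-rule bookkeeping straight; once that is done, the identification of $a,b,c,d$ is a direct matching of coefficients.
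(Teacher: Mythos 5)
Your proposal is correct and follows essentially the same route as the paper: write $\phi_z=\frac{n}{2\zi}$, $\phi_{\ol z}=\frac{n}{2}\zi$ (the paper uses $n=e^\ell$), impose the mixed-derivative compatibility $\phi_{z\ol z}=\phi_{\ol z z}$, and then use the chain rule $\ell_z=\ell_\zi\,\zi_z+\ell_{\ol\zi}\,\ol\zi_z$, $\ell_{\ol z}=\ell_\zi\,\zi_{\ol z}+\ell_{\ol\zi}\,\ol\zi_{\ol z}$ to collect the coefficients $a,b,c,d$ of \eqref{coeff-a-b-c-d}. Your interpretive remark about reading $\ell_\zi,\ell_{\ol\zi}$ as derivatives through the local inverse $z=z(\zi,\ol\zi)$ is exactly what the paper does implicitly, so there is nothing to add.
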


\begin{proof}
Notice that  \eqref{eikonal-equation} becomes:
$$
4\,\phi_{z}\,\phi_{\ol{z}}=n^2=e^{2\ell}.
$$
Next, we have that
$$
\phi_{z}=\frac{e^\ell}{2\,\zi}, \quad \phi_{\ol{z}}=\frac{e^\ell}{2}\, \zi.
$$
Thus, by Schwartz's theorem on mixed second derivatives, we infer that
$$
\frac1{\zi}\,\ell_{\ol{z}}-\frac1{\zi^2}\,\zi_{\ol{z}}=\zi\,\ell_z+\zi_z.
$$
Therefore, we get:
$$
\zi_z+\frac1{\zi^2}\,\zi_{\ol{z}}=-\zi\,\ell_z+\frac1{\zi}\,\ell_{\ol{z}}=
-\zi\,(\ell_\zi\,\zi_z+\ell_{\ol{\zi}}\,\ol{\zi}_z)+\frac1{\zi}\,(\ell_\zi\,\zi_{\ol{z}}+\ell_{\ol{\zi}}\,\ol{\zi}_{\ol{z}}).
$$
We can rearrange this equation and obtain \eqref{a-b-c-d}, when \eqref{coeff-a-b-c-d} is in force.
\end{proof}

\begin{rem}
{\rm
Similarly to what observed in Section \ref{sec:Legendre-constant}, we have that the jacobian $|\zi_z|^2-|\zi_{\ol{z}}|^2\ne 0$ if and only if $\na v\ne 0$. In fact, in this case, \eqref{jacobian-v} changes into
\begin{equation*}
\label{jacobian-v-n}
n\,\ol{\zi}\,(|\zi_z|^2-|\zi_{\ol{z}}|^2)=-2\,i\,|\zi_z|^2 (|\zi|^2+1)(v_x+i\,v_y).
\end{equation*}
Since $\zi$ is non-constant, we can assume that $\zi_{\ol{z}}\ne 0$.}
\end{rem}

\begin{prop}
Suppose that $\zi$ is a solution of \eqref{a-b-c-d} such that $|\zi_{\ol{z}}|^2-|\zi_z|^2\ne 0$.
Then, the function $z(\zi)$ such that $z(\zi(z))=z$ and $\zi(z(\zi))=\zi$ satisfies the equation:
\begin{equation}
\label{b-a-c-d}
b\,z_\zi+a\,\ol{z}_{\ol{\zi}}-c\,z_{\ol{\zi}}-d\,\ol{z}_\zi=0.
\end{equation}
\end{prop}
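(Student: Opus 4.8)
The plan is to read \eqref{b-a-c-d} as the image of \eqref{a-b-c-d} under the Legendre interchange of the roles of $(z,\ol{z})$ and $(\zi,\ol{\zi})$, exactly as \eqref{legendre} was obtained in the constant-index setting. Since \eqref{a-b-c-d} is a first-order linear relation among $\zi_z,\ol{\zi}_{\ol{z}},\zi_{\ol{z}},\ol{\zi}_z$, the whole computation is simply a change of variables in which the coefficients $a,b,c,d$ are carried along as scalars; their explicit form \eqref{coeff-a-b-c-d} plays no role.

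First I would record the four inversion formulas for the derivatives of $z$ with respect to $\zi,\ol{\zi}$. The hypothesis $|\zi_{\ol{z}}|^2-|\zi_z|^2\ne 0$ guarantees that $\zi=\zi(z,\ol{z})$ is locally invertible, and \eqref{legendre} already provides
\[
z_\zi=\frac{\ol{\zi}_{\ol{z}}}{|\zi_z|^2-|\zi_{\ol{z}}|^2}, \qquad z_{\ol{\zi}}=-\frac{\zi_{\ol{z}}}{|\zi_z|^2-|\zi_{\ol{z}}|^2}.
\]
The two remaining derivatives come for free by conjugation, using $\ol{z}_\zi=\ol{z_{\ol{\zi}}}$ and $\ol{z}_{\ol{\zi}}=\ol{z_\zi}$ together with $\ol{\zi_{\ol{z}}}=\ol{\zi}_z$ and $\ol{\zi_z}=\ol{\zi}_{\ol{z}}$, namely
\[
\ol{z}_\zi=-\frac{\ol{\zi}_z}{|\zi_z|^2-|\zi_{\ol{z}}|^2}, \qquad \ol{z}_{\ol{\zi}}=\frac{\zi_z}{|\zi_z|^2-|\zi_{\ol{z}}|^2}.
\]

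Next I would substitute these four expressions into the left-hand side of \eqref{b-a-c-d} and pull out the common factor $(|\zi_z|^2-|\zi_{\ol{z}}|^2)^{-1}$. The two minus signs carried by $z_{\ol{\zi}}$ and $\ol{z}_\zi$ cancel the minus signs standing in front of $c$ and $d$ in \eqref{b-a-c-d}, while $a$ ends up multiplying $\zi_z$ (through $\ol{z}_{\ol{\zi}}$) and $b$ ends up multiplying $\ol{\zi}_{\ol{z}}$ (through $z_\zi$). What remains inside the bracket is therefore
\[
a\,\zi_z+b\,\ol{\zi}_{\ol{z}}+c\,\zi_{\ol{z}}+d\,\ol{\zi}_z,
\]
which is precisely the left-hand side of \eqref{a-b-c-d} and hence vanishes by assumption. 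As the prefactor $(|\zi_z|^2-|\zi_{\ol{z}}|^2)^{-1}$ is finite and nonzero, equation \eqref{b-a-c-d} follows.

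There is no genuine obstacle in this argument; the only delicate point is the bookkeeping of the conjugation identities $\ol{\zi_z}=\ol{\zi}_{\ol{z}}$ and $\ol{\zi_{\ol{z}}}=\ol{\zi}_z$ together with the signs, so that each coefficient lands in its correct slot. The structural content worth emphasizing is that the Legendre inversion acts on the operator through the adjugate of the Jacobian of $\zi$: the diagonal coefficients $a$ and $b$ trade the places of $\zi_z$ and $\ol{\zi}_{\ol{z}}$, whereas the off-diagonal coefficients $c$ and $d$ acquire a sign change. This is exactly the coefficient pattern displayed in \eqref{b-a-c-d}.
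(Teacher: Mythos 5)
Your proof is correct and is essentially the paper's own argument: the paper's one-line proof applies the inversion formulas \eqref{legendre} to \eqref{a-b-c-d} to produce \eqref{b-a-c-d}, while you carry out the same substitution read in the opposite direction, expressing $z_\zi, z_{\ol{\zi}}, \ol{z}_\zi, \ol{z}_{\ol{\zi}}$ through the derivatives of $\zi$ and reducing \eqref{b-a-c-d} back to \eqref{a-b-c-d}. The sign bookkeeping, the conjugation identities, and the use of the nonvanishing Jacobian are exactly as the paper intends.
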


\begin{proof}
In order to obtain \eqref{b-a-c-d}, it is sufficient to apply the first two equations in \eqref{legendre} to \eqref{a-b-c-d}. 
\end{proof}

The following result is the crucial step for the construction of parametrized solutions of \eqref{eikonal-equation} with non-constant index of refraction.

\begin{thm}
\label{th:parametrization-z-n}
Let $a, b, c$, and $d$ be given by \eqref{coeff-a-b-c-d}, set
\begin{equation}
\label{A-coefficients}
\begin{array}{ll}
\displaystyle  A_{11}=2\,\frac{\im\bigl[(a+b)\,(\ol{c+d})\bigr]}{|a+c|^2-|b+d|^2}, \quad
&\displaystyle A_{12}=\frac{|a+d|^2-|b+c|^2}{|a+c|^2-|b+d|^2}, \\
\displaystyle  A_{21}=\frac{|a-d|^2-|b-c|^2}{|a+c|^2-|b+d|^2}, \quad 
&\displaystyle A_{22}=2\,\frac{\im\bigl[(\ol{a-b})\,(c-d)\bigr]}{|a+c|^2-|b+d|^2},
\end{array}
\end{equation}
and suppose that 
\begin{equation}
\label{ellipticity}
4\,A_{12}\,A_{21}-(A_{11}+A_{22})^2>0, \ A_{12}>0.
\end{equation}
\par
Let $z$ be a solution of \eqref{b-a-c-d}. Then there exist a quasi-conformal homeomorphism $\chi=\chi(\zi)$, an analytic function $f(\chi)$, and two complex-valued functions  $\ka(\chi)$ and  $s(\chi)$ such that
\begin{equation}
\label{parametrization-z-non-constant}
z(\zi)=Z(\chi(\zi)),
\end{equation}
with 
\begin{equation}
\label{parametrization-z-non-constant-F}
Z(\chi)=\frac{e^{s(\chi)} f(\chi)+\ka(\chi)\,\ol{f(\chi)\,e^{s(\chi)}}}{1-|\ka(\chi)|^2}.
\end{equation}
\end{thm}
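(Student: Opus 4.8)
The plan is to read \eqref{b-a-c-d} as a first-order \emph{linear elliptic system} for the unknown $z=z(\zi,\ol\zi)$, to straighten its principal part by a quasi-conformal change of variables into a Bers--Vekua canonical form, and then to apply the similarity principle; the representation \eqref{parametrization-z-non-constant-F} will drop out of an elementary linear algebra step, exactly as \eqref{parametrization} did from \eqref{complex-rays} in Theorem \ref{th:parametrization}. Observe at the outset that the objects $\chi$, $\ka$, $s$ will depend only on the coefficients \eqref{coeff-a-b-c-d} (hence on $\ell=\log n$), while the holomorphic $f$ alone encodes the particular solution $z$.

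First I would solve \eqref{b-a-c-d} algebraically for $z_{\ol\zi}$. Since $\ol z_{\ol\zi}=\ol{z_\zi}$ and $\ol z_\zi=\ol{z_{\ol\zi}}$, the equation \eqref{b-a-c-d} together with its conjugate is a $2\times2$ complex-linear system in $z_{\ol\zi},\ol{z_{\ol\zi}}$ with determinant $|c|^2-|d|^2$ (nonzero under the standing non-degeneracy assumption). Solving it yields a reduced equation
\[
z_{\ol\zi}=\mu\,z_\zi+\nu\,\ol{z_\zi},
\]
with $\mu,\nu$ explicit rational expressions in $a,b,c,d$. Splitting this into real and imaginary parts reproduces exactly the real first-order system whose coefficient matrix is $(A_{ij})$ of \eqref{A-coefficients}, and the two inequalities \eqref{ellipticity} are precisely the statement that this system is uniformly elliptic, equivalently that $|\mu|+|\nu|$ is bounded away from $1$. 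In the constant case one checks $\mu\equiv0$ and $\nu=\zi^2$, so the equation is simply $\pa_{\ol\zi}(z-\zi^2\ol z)=0$, recovering \eqref{complex-rays}; in general the $\CC$-linear term $\mu\,z_\zi$ must be eliminated.

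To remove it I would substitute a quasi-conformal homeomorphism $\chi=\chi(\zi)$. Writing $Z=z\circ\chi^{-1}$ and using the chain rule, the coefficient of $Z_\chi$ in the transformed equation can be annihilated by requiring $\chi$ to solve a Beltrami equation $\chi_{\ol\zi}=\si\,\chi_\zi$ with $\si$ determined by $\mu,\nu$; uniform ellipticity \eqref{ellipticity} gives $\|\si\|_\infty<1$, so the measurable Riemann mapping theorem furnishes the required univalent $\chi$. After the substitution $Z$ satisfies a Bers--Vekua equation
\[
Z_{\ol\chi}=\ka\,\ol{Z_\chi}+\al\,Z+\be\,\ol Z,
\]
whose principal coefficient $\ka$ is the $\chi$-transform of $\nu$ and satisfies $|\ka|<1$, with $\al,\be$ the lower-order coefficients produced by the reduction. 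Choosing $s$ by solving $s_{\ol\chi}=\al$ and setting $g:=Z-\ka\,\ol Z$, one computes $g_{\ol\chi}=s_{\ol\chi}\,Z+(\be-\ka_{\ol\chi})\,\ol Z$, which collapses to the single similarity relation $g_{\ol\chi}=s_{\ol\chi}\,g$ as soon as $\ka$ obeys the compatibility identity $\ka_{\ol\chi}=\al\,\ka+\be$. The similarity principle then gives that $f:=e^{-s}g$ is holomorphic in $\chi$, i.e. $Z-\ka\,\ol Z=e^{s}f$; pairing this with its conjugate $\ol Z-\ol\ka\,Z=\ol{e^{s}f}$ and solving for $Z$ (legitimate since $|\ka|<1$) produces \eqref{parametrization-z-non-constant-F}, and $z(\zi)=Z(\chi(\zi))$ is \eqref{parametrization-z-non-constant}.

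The main obstacle is this middle stage. Concretely, one must verify that the algebraic ellipticity \eqref{ellipticity} really translates into the \emph{strict} bounds $\|\si\|_\infty<1$ and $|\ka|<1$ needed both for the quasi-conformal map and for the invertibility of the final linear system, and --- more delicately --- that the lower-order coefficients $\al,\be$ fit together with the intrinsic principal coefficient $\ka$ so that the compatibility identity $\ka_{\ol\chi}=\al\,\ka+\be$ holds, allowing a \emph{single} integrating factor $e^{s}$ and a single twist $\ka$ to absorb both terms at once. This is exactly where the Bers--Vekua theory of generalized analytic functions and the similarity principle are indispensable; once it is in place, the remaining manipulations are the same algebraic reductions already carried out, in the constant case, in the proof of Theorem \ref{th:parametrization}.
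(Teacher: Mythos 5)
Your overall architecture matches the paper's: reduce \eqref{b-a-c-d} to a complex equation $z_{\ol\zi}=\mu\,z_\zi+\nu\,\ol{z_\zi}$, straighten it with a quasi-conformal homeomorphism solving the Beltrami equation \eqref{beltrami}, pass to a canonical form, and finish with the similarity principle plus linear algebra; your direct algebraic derivation of $\mu,\nu$ (solving the $2\times 2$ system with determinant $|c|^2-|d|^2$) is a legitimate variant of the paper's route through the real system \eqref{system}. However, the middle stage --- which you yourself flag as the main obstacle --- contains a genuine error. A change of the independent variable $\zi\mapsto\chi(\zi)$ applied to the homogeneous first-order equation \eqref{pseudo-analytic} cannot create zero-order terms: writing $z(\zi)=Z(\chi(\zi))$, every derivative of $z$ is a combination of $Z_\chi$ and $Z_{\ol\chi}$ multiplied by derivatives of $\chi$, so the transformed equation is exactly the canonical form \eqref{canonical}, $Z_{\ol\chi}=\ka\,\ol{Z}_{\ol\chi}$, with \emph{no} terms $\al\,Z+\be\,\ol Z$. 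Consequently, in your scheme $\al=\be=0$, and your ``compatibility identity'' $\ka_{\ol\chi}=\al\,\ka+\be$ degenerates to $\ka_{\ol\chi}=0$, i.e.\ $\ka$ holomorphic in $\chi$ --- which is false in general, since $\ka$ is determined by the derivatives of $\ell=\log n$ through \eqref{coeff-a-b-c-d} and is not at your disposal.

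The effect is that you have assumed away precisely the difficulty that the similarity principle exists to handle. The zero-order terms arise not from the change of variables but from the substitution $W=Z-\ka\,\ol Z$, because $\ka$ must be differentiated: using \eqref{canonical} and $\ol Z=(\ol W+\ol{\ka}\,W)/(1-|\ka|^2)$ one finds
\[
W_{\ol\chi}=-\frac{\ol{\ka}\,\ka_{\ol\chi}}{1-|\ka|^2}\,W-\frac{\ka_{\ol\chi}}{1-|\ka|^2}\,\ol W,
\]
and the essential feature is the presence of the $\ol W$ term. An equation of the form $g_{\ol\chi}=s_{\ol\chi}\,g$, as in your proposal, needs no similarity principle at all --- $e^{-s}g$ is holomorphic by a trivial integrating factor; the similarity principle is required exactly because of the $\ol W$ term, whose coefficient effectively multiplies the quotient $\ol W/W$, a quantity depending on the unknown solution, so that the exponent $s$ cannot be prescribed independently of $W$. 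Replacing your middle stage by this computation, and then concluding as you do ($W=e^{s}f$ with $f$ analytic, followed by solving $W=Z-\ka\ol Z$ and its conjugate for $Z$), repairs the argument and is precisely the paper's proof.
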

\begin{proof}
Let $z=x+i\,y$ and $\zi=\xi+i\,\eta$. Lengthy computations give that \eqref{b-a-c-d} is equivalent to the system:
\begin{equation}
\label{system}
y_\xi=A_{11}\,x_\xi+A_{12}\,x_\eta, \quad
-y_\eta=A_{21}\,x_\xi+A_{22}\,x_\eta.
\end{equation}
The coefficients of the system are given by \eqref{A-coefficients}.
\par
Next, we follow the arguments of \cite[Section 12]{Be} on pseudo-analytic functions. Since \eqref{ellipticity} holds,
the system \eqref{system} is declared elliptic and $z(\zi)$ satisfies the equation:
\begin{equation}
\label{pseudo-analytic}
z_{\ol{\zi}}=\mu\,z_\zi+\nu\,\ol{z}_{\ol{\zi}},
\end{equation}
where
\begin{eqnarray*}
&&\mu=\frac{A_{11}+A_{22}+i\,(A_{12}-A_{21})}{2-(A_{12}+A_{21})+A_{12}A_{21}-A_{11}A_{22}}, \\ 
\\
&&\nu=\frac{1+A_{11}A_{22}-A_{12}A_{21}-i\,(A_{22}-A_{11})}{2-(A_{12}+A_{21})+A_{12}A_{21}-A_{11}A_{22}}.
\end{eqnarray*}
It turns out that $|\mu|+|\nu|<1$, under the assumption \eqref{ellipticity}.
\par
Equation \eqref{pseudo-analytic} can be conveniently transformed into a canonical form.
In fact, since $|\mu|+|\nu|<1$, it exists a homeomorphism $\chi=\chi(\zi)$
with respect to the metric associated to the system \eqref{system}, i.e. a univalent solution of the \textit{Beltrami equation}
\begin{equation}
\label{beltrami}
\chi_{\ol{\zi}}=\si\,\chi_\zi,
\end{equation}
such that the function $Z$ defined by $z(\zi)=Z(\chi(\zi))$ is a solution of the canonical equation
\begin{equation}
\label{canonical}
Z_{\ol{\chi}}=\ka\,\ol{Z}_{\ol{\chi}}.
\end{equation}
\par
In terms of the coefficients in the system \eqref{system}, we get:
\begin{equation}
\label{def-sigma}
\si=\frac{A_{12}-A_{21}-i\,(A_{11}+A_{22})}{A_{12}+A_{21}+\sqrt{4\,A_{12}\,A_{21}-(A_{11}+A_{22})^2}}.
\end{equation}
Under the assumption \eqref{ellipticity}, it holds that $|\si|<1$. Also, we compute that
\begin{equation}
\label{def-sigma-hat}
\ka=\frac{\nu}{1-\mu\,\ol{\si}},
\end{equation}
and again $|\ka|<1$.
\par
We can further transform the equation for $Z$ by setting $W=Z-\ka\,\ol{Z}$. It turns out that $W$ is a solution of the equation:
$$
W_{\ol{\chi}}=B\,W+C\,\ol{W},
$$
where
$$
B=-\frac{\ol{\ka}\,\ka_{\ol{\chi}}}{1-|\ka|^2}, \quad C=-\frac{\ka_{\ol{\chi}}}{1-|\ka|^2}.
$$
This means that $W$ is a pseudo-analytic function of the first kind in the $\chi$-plane.
\par
It is well known that pseudo-analytic functions of the first kind  satisfy the similarity principle (see \cite[Section 10]{Be}). Thus, we can infer that there exist an analytic function $f$ and a complex-valued function $s$ such that
$$
W(\chi)=e^{s(\chi)} f(\chi).
$$
\par
This last formula and the fact that
$$
Z=\frac{W+\ka\,\ol{W}}{1-|\ka|^2}
$$ 
give the desired conclusion.
\end{proof}

\begin{cor}
\label{cor:recovering-phi}
Under the assumptions of Theorem \ref{th:parametrization-z-n}, any solution $\phi$ 
of \eqref{eikonal-equation} can be parametrized as in \eqref{parametrization-non-constant},
$$
z=Z(\chi(\zi)), \quad \phi=\Phi(\zi, \chi(\zi)),
$$
where
\begin{eqnarray*}
&&\phi_\zi = \frac{N(\chi)}{2\,\zi}\,\Bigl[(1+\zi^2 \ol{\ka})\, Z_\chi\,\chi_\zi+
\ol{\si}\,(\ka+\zi^2)\,\ol{Z_\chi \chi_\zi}\Bigr], \\
&&\phi_{\ol{\zi}}=\frac{N(\chi)}{2\,\zi}\,\Bigl[\si\,(1+\zi^2 \ol{\ka})\, Z_\chi\,\chi_\zi+
(\ka+\zi^2)\,\ol{Z_\chi \chi_\zi}\Bigr],
\end{eqnarray*}
where $N=n\circ Z$.
\end{cor}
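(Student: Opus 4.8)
The plan is to read off the two displayed identities by a direct chain‑rule computation, using as input only the expressions for $\phi_z$ and $\phi_{\ol z}$ already recorded in the proof of the preceding Proposition together with the structural equations \eqref{beltrami} and \eqref{canonical} supplied by Theorem \ref{th:parametrization-z-n}. Since $\phi$ solves \eqref{eikonal-equation}, we have $\phi_z=\tfrac{n}{2\zi}$ and $\phi_{\ol z}=\tfrac{n}{2}\,\zi$, and because the parametrization furnishes $z=Z(\chi(\zi))$, the index of refraction along the parametrization is $n(z)=n\bigl(Z(\chi(\zi))\bigr)=N(\chi(\zi))$ with $N=n\circ Z$. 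The first step is then simply to differentiate the composition, which gives
$$
\phi_\zi=\phi_z\,z_\zi+\phi_{\ol z}\,\ol z_\zi=\frac{N}{2\zi}\bigl(z_\zi+\zi^2\,\ol z_\zi\bigr),\qquad
\phi_{\ol\zi}=\phi_z\,z_{\ol\zi}+\phi_{\ol z}\,\ol z_{\ol\zi}=\frac{N}{2\zi}\bigl(z_{\ol\zi}+\zi^2\,\ol z_{\ol\zi}\bigr),
$$
so that the whole matter reduces to expressing the four derivatives $z_\zi,z_{\ol\zi},\ol z_\zi,\ol z_{\ol\zi}$ in terms of $Z_\chi\chi_\zi$ and its conjugate.

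The second step is this reduction. I would differentiate $z=Z(\chi(\zi))$, where $\chi$ and $Z$ each depend on a variable and on its conjugate, and use the elementary identities $\pa_\zi\ol\chi=\overline{\chi_{\ol\zi}}$ and $\pa_{\ol\zi}\ol\chi=\overline{\chi_\zi}$. Feeding in the Beltrami equation \eqref{beltrami}, $\chi_{\ol\zi}=\si\,\chi_\zi$, and reading the canonical equation \eqref{canonical} as $Z_{\ol\chi}=\ka\,\overline{Z_\chi}$, and abbreviating $G:=Z_\chi\chi_\zi$, one obtains the compact list
$$
z_\zi=G+\ka\,\ol\si\,\ol G,\qquad z_{\ol\zi}=\si\,G+\ka\,\ol G,\qquad \ol z_\zi=\ol\si\,\ol G+\ol\ka\,G,\qquad \ol z_{\ol\zi}=\ol G+\ol\ka\,\si\,G,
$$
the last two being nothing but the complex conjugates $\ol z_\zi=\overline{z_{\ol\zi}}$ and $\ol z_{\ol\zi}=\overline{z_\zi}$.

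For the final step I would substitute these four expressions into the formulas of the first step and collect the coefficients of $G$ and of $\ol G$. For $\phi_\zi$ the coefficient of $G$ is $1+\zi^2\ol\ka$ and that of $\ol G$ is $\ol\si(\ka+\zi^2)$; for $\phi_{\ol\zi}$ they are $\si(1+\zi^2\ol\ka)$ and $\ka+\zi^2$. Recalling $G=Z_\chi\chi_\zi$ and $\ol G=\overline{Z_\chi\chi_\zi}$, this reproduces verbatim the two identities in the statement. The function $\Phi$ is then recovered by integration: since we started from a genuine solution $\phi$, the mixed derivatives agree and the one‑form $\phi_\zi\,d\zi+\phi_{\ol\zi}\,d\ol\zi$ is exact, so it integrates to a function of $(\zi,\chi)$ which, evaluated at $\chi=\chi(\zi)$, gives $\phi=\Phi(\zi,\chi(\zi))$.

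I expect no real conceptual difficulty here, since all the substance is already carried by Theorem \ref{th:parametrization-z-n}; what remains is bookkeeping. The one place that demands genuine care is the consistent use of the conjugation identities together with the precise interpretation of \eqref{canonical} as $Z_{\ol\chi}=\ka\,\overline{Z_\chi}$ — a reading one can cross‑check against the formulas for $B$ and $C$ derived earlier in the proof of the theorem — because a single misplaced conjugate or sign at that stage would corrupt both final expressions.
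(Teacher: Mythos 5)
Your proposal is correct and follows essentially the same route as the paper: chain rule with $\phi_z=\tfrac{n}{2\zi}$, $\phi_{\ol z}=\tfrac{n\zi}{2}$, then substitution of the Beltrami equation \eqref{beltrami} and the canonical equation \eqref{canonical} to express $z_\zi,z_{\ol\zi},\ol z_\zi,\ol z_{\ol\zi}$ in terms of $Z_\chi\chi_\zi$ and its conjugate, and finally integration of the exact one-form to recover $\Phi$. In fact, your explicit bookkeeping (the four derivative formulas in terms of $G=Z_\chi\chi_\zi$) fills in precisely the step the paper leaves to the reader.
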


\begin{proof}
We proceed as in the proof of Theorem \ref{th:parametrization}. In fact, we still observe that 
$\phi_\zi = \phi_z z_\zi + \phi_{\ol{z}}\ol{z}_\zi$ and $\phi_{\ol{\zi}} = \phi_z z_{\ol{\zi}} + \phi_{\ol{z}}\ol{z}_{\ol{\zi}}$, and hence we obtain that
$$
\phi_\zi = \frac{n}{2\,\zi}\,z_\zi+ \frac{n\,\zi}{2}\,\ol{z}_\zi, \quad 
\phi_{\ol{\zi}} = \frac{n}{2\,\zi}\,z_{\ol{\zi}}+ \frac{n\,\zi}{2}\,\ol{z}_{\ol{\zi}}.
$$
Thus, we find the desired two formulas for $\phi_\zi$ and $\phi_{\zi}$, by using \eqref{beltrami} and \eqref{canonical}.
\par
By integrating the obtained two formulas one for $\phi_\zi$ and $\phi_{\zi}$ we can finally recover $\phi=\Phi(\zi, \chi(\zi))$.
\end{proof}

\section{On caustics and light and shadow regions} 
\label{sec:light-shadow}
As already observed in the previous sections, real-valued solutions of \eqref{eikonal-equation} describe the propagation of light along rays. Thus, the region swept by rays --- the light region --- corresponds to the set in which the eikonal $\phi=u+i\,v$ is real-valued or, up to some suitable normalization, to the set of critical points of $v$. The envelope of rays create a so-called caustic that bounds the light region.  Outside that region, i.e. beyond the caustic,  we have that $\na v\ne 0$ and, up to normalizing an additive imaginary constant, we can assume that $v<0$, so that the exponentials in \eqref{sviluppo WKBJ} or \eqref{sviluppo EWT} rapidly decay to zero as the wave number $k$ becomes large. This is the so-called shadow region.
\par
In this section, we shall thus investigate on the region in which $\na v\ne 0$, by means of our parametrization. 

\subsection{The case of constant index of refraction $n$}
As noticed in Section \ref{sec:Legendre-constant}, \eqref{jacobian-v} informs us that the map $z\mapsto\zi(z)$ is locally invertible, and hence \eqref{inverse-schwarz} holds as well.
\par
An inspection of \eqref{parametrization} tells us that the parametrization is not defined on the unit disk on which $|\zi|^2=1$. By \eqref{jacobian-v}, we see that this holds exactly if $\na v=0$. Next, observe that equation \eqref{complex-rays} defines a complex ray for each fixed $\zi$. Since $\zi=\xi+i\,\eta$, \eqref{complex-rays} can be re-written as
\begin{eqnarray*}
&&(1-\xi^2+\eta^2)\,x-2\,\xi\,\eta\, y=\re[f(\xi+i\,\eta)], \\ 
&&-2\,\xi\,\eta\, x+(1+\xi^2-\eta^2)\,y=\im[f(\xi+i\,\eta)].
\end{eqnarray*}
If we use polar coordinates $r\,e^{i\,\te}$ for $\zi$, the system becomes:
\begin{eqnarray*}
&&[1-r^2\cos(2\te)]\,x-r^2\sin(2\,\te)\, y=\re[f(r\,e^{i\,\te})],  \\ 
&&-r^2\sin(2\,\te)\, x+[1+r^2\cos(2\te)]\,y=\im[f(r\,e^{i\,\te})].
\end{eqnarray*}
It is easy to compute that the determinant of this linear system equals $1-r^4$. Thus, when $\na v\ne 0$, we have that $|\zi|^2\ne 1$, and hence each point $\zi=r\,e^{i\,\te}$ determines a unique point $z=x+i\,y$. In other words, the parametrization uniquely detemines all the regular points of $v$, which correspond to the points in the shadow region.
\par
In the light region, i.e. when $\na v=0$, 
the system degenerates into the two equations: 
\begin{equation}
\label{coincident-lines}
\begin{array}{l}
[1-\cos(2\te)]\,x-\sin(2\,\te)\, y=\re[f(e^{i\,\te})],  \\ 
-\sin(2\,\te)\, x+[1+\cos(2\te)]\,y=\im[f(e^{i\,\te})].
\end{array}
\end{equation}
Since in this case the determinant of this system is always zero, the system has solutions if and only if
\begin{equation}
\label{condition}
\cos\te\,\re[f(e^{i\,\te})]+\sin\te\,\im[f(e^{i\,\te})]=0
\end{equation}
or, in complex notation if and only if
\begin{equation}
\label{condition-complex}
\re\bigl[f(e^{i\,\te})\, e^{-i\,\te}]=0.
\end{equation}
\par
If this condition holds for some $\te$, then the equation $\na v=0$ is satisfied at all points of the line defined by either one equation in \eqref{coincident-lines}. If \eqref{condition} does not hold for some $\te$, the lines defined in \eqref{coincident-lines} are parallel, and hence no points on them is a critical point of $v$.
We can summarize this analysis into the following result.

\begin{prop} 
\label{prop:rays}
It holds that the parametrization \eqref{parametrization} maps:
\begin{enumerate}[(i)]
\item
each point $\zi$ outside the unit circle onto a point of the shadow region;
\item
each point $\zi$ on the unit circle that satisfies $\re\bigl[f(e^{i\,\te})\,e^{-i\,\te}]=0$ onto a segment  within the light region, whose equation is given in \eqref{coincident-lines} (either one will do);
\item
any other point on the unit circle to $\infty$.
\end{enumerate}
\end{prop}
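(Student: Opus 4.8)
The plan is to organize the three cases of the statement according to the value of $|\zi|$ relative to $1$, reading them off from the linear system obtained from the complex ray equation \eqref{complex-rays} and from the explicit formula \eqref{parametrization}. Claim (i) is immediate: if $\zi$ lies strictly outside the unit circle then $|\zi|^2\ne 1$, so by \eqref{jacobian-v} the jacobian $|\zi_z|^2-|\zi_{\ol{z}}|^2$ does not vanish and $\na v\ne 0$; equivalently, the determinant $1-r^4$ of the polar form of the system is nonzero, so the formula \eqref{parametrization} assigns to $\zi$ a single finite point $z$. Since $\na v\ne 0$ at that point, it belongs to the shadow region.

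The behaviour on the unit circle, which governs (ii) and (iii), I would derive from a single algebraic identity. Evaluating the numerator of the first line of \eqref{parametrization} at $\zi=e^{i\,\te}$ and factoring out $e^{i\,\te}$ gives
\begin{equation*}
f(e^{i\,\te})+e^{2i\,\te}\,\ol{f(e^{i\,\te})}
=e^{i\,\te}\Bigl(e^{-i\,\te}f(e^{i\,\te})+\ol{e^{-i\,\te}f(e^{i\,\te})}\Bigr)
=2\,e^{i\,\te}\,\re\bigl[f(e^{i\,\te})\,e^{-i\,\te}\bigr].
\end{equation*}
Thus the numerator vanishes on the circle exactly when the condition \eqref{condition-complex} holds, which is precisely what separates the two remaining cases.

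For (ii), when \eqref{condition-complex} holds both the numerator and the denominator $1-|\zi|^4$ of \eqref{parametrization} tend to zero as $\zi\to e^{i\,\te}$, and one is led back to the degenerate system \eqref{coincident-lines}. Using $1-\cos(2\te)=2\sin^2\te$, $1+\cos(2\te)=2\cos^2\te$, and $\sin(2\te)=2\sin\te\cos\te$, both rows of that system are proportional to $(\sin\te,-\cos\te)$, so the coefficient matrix has rank one; its solvability then requires the right-hand side $\bigl(\re[f(e^{i\,\te})],\im[f(e^{i\,\te})]\bigr)$ to be proportional to the same vector, which is exactly \eqref{condition}, i.e. \eqref{condition-complex}. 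Hence the solution set is a line, each of whose points satisfies $\na v=0$ and therefore lies in the light region. Case (iii) follows from the same identity: when \eqref{condition-complex} fails, the numerator of \eqref{parametrization} stays bounded away from zero while $1-|\zi|^4\to 0$ as $\zi\to e^{i\,\te}$, so $|z|\to\infty$.

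The step I expect to require the most care is the assertion in (ii) that the image is a \emph{segment} rather than an unbounded line, and that the two equations in \eqref{coincident-lines} may be used interchangeably. The interchangeability is exactly the rank-one computation above (both rows, and the consistent right-hand side, are multiples of $(\sin\te,-\cos\te)$); the restriction to a segment is where the (local) geometry of the light region bounded by the caustic enters, since only the portion of the line lying in the relevant region is retained. The remaining ingredients — the simplification of the trigonometric coefficients and the use of Cramer's rule together with the explicit formula \eqref{parametrization} — are routine.
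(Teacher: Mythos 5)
Your proof is correct and follows essentially the same route as the paper: reduce \eqref{complex-rays} to the real linear system with determinant $1-r^4$, handle $|\zi|\ne 1$ via \eqref{jacobian-v} and unique solvability, and on the unit circle use the rank-one degeneration of \eqref{coincident-lines} together with the consistency condition \eqref{condition}--\eqref{condition-complex}. The only cosmetic difference is in (iii), where you let $\zi\to e^{i\,\te}$ in \eqref{parametrization} (numerator bounded away from zero over a vanishing denominator), while the paper instead observes that the two lines in \eqref{coincident-lines} are parallel and the system inconsistent, so no finite $z$ exists; these are equivalent observations.
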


\begin{rem}
{\rm
Let $\phi=u+i\,v$ be a solution of \eqref{eikonal-equation} in an open domain $\Om\subseteq\CC$. Proposition \ref{prop:rays} and \eqref{jacobian-v} inform us that the set of critical points of $v$,
$$
\cC_v=\{ z\in\Om: \na v(z)=0\},
$$
does not contain isolated points. In fact, $\cC_v$ is mapped by the parametrization for $z$ in \eqref{parametrization} from the set $\cS_\phi$ defined by
$$
\cS_\phi=\{ \zi=e^{i\,\te}: \re[f(e^{i\,\te})\,e^{-i\,\te}]=0\}.
$$
Thus, discrete points in $\cS_\phi$ are mapped to segments in $\cC_v$ (as it happens, for instance, for \eqref{dist}). If $\cS_\phi$ contains an arc, then a corresponding pencil of segments sweeps a subset of $\cC_v$ with non-empty interior. This analysis confirms similar results previously obtained in \cite{MT1, CM}, by other means and different viewpoints.
\par
Notice that, because of the condition \eqref{condition-complex}, the parametrization for $z$ in \eqref{parametrization} does converge as $\zi$ converges to points in $\cS_\phi$. In fact, in the limit as $|\zi|\to 1$, we obtain from \eqref{parametrization} that $z(\zi)$ is a point in the shadow region that converges to the point:
$$
z=-\frac14\,e^{i\,\te} [f'(e^{i\,\te})+\ol{f'(e^{i\,\te})}]-\frac12\,e^{2\,i\,\te} \ol{f(e^{i\,\te})}.
$$ 
When $\cS_\phi$ contains an arc, by using \eqref{condition-complex} and its derivative, we then can infer that
\begin{equation*}
\label{caustic-parametrization}
z=f(e^{i\,\te})-\frac12\,e^{i\,\te} f'(e^{i\,\te}),
\end{equation*}
for $e^{i\,\te}$ belonging to that arc. The last equation provides a parametrization of a caustic, which separates the shadow region from the light region $\cC_v$.
\par
By means of \eqref{parametrization}, we can also compute the values of $\phi$ on $\cC_v$.
}
\end{rem}

\begin{ex}
{\rm
One can concretely construct an analytic function $f(\zi)$ in a neighborhood of the unit circle such that  $\cS_v$ contains an arc. It is sufficient to construct $\re[f(\zi)/\zi]$ as a harmonic function which is zero on an arc of the unit circle, and then extend it analytically on a neighborhood.
\par
In fact, by the classical Poisson's formula (see \cite{Ru}), 
$$
g(\zi)=\frac1{2\pi}\int_{-\pi}^\pi \frac{e^{it}+\zi}{e^{i t}-\zi}\,\re g(e^{i t})\,dt
$$
represents the analytic function in the unit disk with real part assigned on the unit circle.
We thus can choose, for instance, that
$$
\re g(e^{i t})=\re\bigl[f(e^{i t})/e^{i t}\bigr]=\max(|t|-\tau,0),
$$
with $0<\tau<\pi$, to obtain the formula:
$$
f(\zi)=\frac1{\pi}\int_0^{\pi-\tau}\frac{\zi\,(1-\zi^2)\,t\,dt}{1+\zi^2-2\,\zi\,\cos(\si+t)}.
$$
This formula gives an analytic function outside the arc $\ga=\{e^{i \te}:\tau\le |\te|\le\pi\}$, continuous up to the boundary, and such that $\re[f(e^{i \te})/e^{i \te}]=0$ for $0\le |\te|<\tau$.
This choice of $f(\zi)$ will thus generate a caustic, thanks to the parametrization \eqref{parametrization-constant}, and a set $\cC_v$ with non-empty interior.
\par
Of course, the values of $\re[f(e^{i \te})/e^{i \te}]$ on $\ga$ can be arbitrarily changed, in order to obtain other examples.
}
\end{ex}

\subsection{The case of non-constant index of refraction}
The situation is more complicated, and slightly different since we do not directly work with $\zi(z)$, but rather with $z(\zi)$. However, the geometric information we derive is quite similar. \par
Once the coefficients in \eqref{A-coefficients} are determined in terms of $\ell$, we can compute $\si$ and $\ka$, thanks to \eqref{def-sigma} and \eqref{def-sigma-hat}.
After a homeomorphism $\chi(\zi)$ satisfying the Beltrami equation \eqref{beltrami} is determined, we can examine the parametrization \eqref{parametrization-z-non-constant}-\eqref{parametrization-z-non-constant-F}. 
\par
Recall that the functions $\zi(z)$ and $z(\zi)$ are the inverse of one another. Thus, from \eqref{parametrization-z-non-constant} we obtain that
$$
z=z(\zi(z))=Z(\chi(\zi(z))),
$$
i.e. the composition $\om=\chi\circ\zi$ takes points of the $z$-plane to points of the $Z$-plane, so that any curve in the $Z$-plane is deformed by the inverse $\om^{-1}$ onto a corresponding curve in the $z$-plane.
\par
Now, notice that \eqref{parametrization-z-non-constant} and \eqref{parametrization-z-non-constant-F} give that
\begin{equation}
\label{deformed-ray}
Z-\ka(\chi)\,\ol{Z}=e^{s(\chi)} f(\chi).
\end{equation}
Thus, any point in the $\chi$-plane determines a complex ray in the $Z$-plane, and this is deformed by $\om^{-1}$ to obtain in the $z$-plane a complex ray in a metric induced by the index of refraction $n$.
\par
An analysis similar to that of Proposition \ref{prop:rays} informs us that the points $\chi$ not belonging to the set $\ka^{-1}(\SS^1)$ are mapped onto the shadow region. Each point $\chi$ satisfying
\begin{equation}
\label{caustic-conditions}
|\ka(\chi)|^2=1 \ \mbox{ and } \ e^{s(\chi)} f(\chi)+\ka(\chi)\,\ol{f(\chi)\,e^{s(\chi)}}=0
\end{equation}
is mapped to a segment with equation \eqref{deformed-ray} in the $Z$-plane, and hence to a deformed segment in the $z$-plane, by means of the mapping $\om^{-1}$. When \eqref{caustic-conditions} is satisfied  on a curve in the $\chi$-plane, these deformed segments sweep the light region and their envelope creates a caustic separating the light from the shadow region.

\section*{Acknowledgements}
The author wish to thank Giulio Ciraolo for some useful discussions. The paper is partially supported by the Gruppo Nazionale per l'Analisi Matematica, la Probabilit\`a e le loro Applicazioni (GNAMPA) dell'Istituto Nazionale di Alta Matematica (INdAM).

\end{document}